\newcommand*{\sheafhom}{\mathcal{H}\kern -.5pt om}
\numberwithin{equation}{section} 
\numberwithin{figure}{section} 
\numberwithin{table}{section} 
\newtheorem{thm}{Theorem}[section]
\newtheorem{cor}[thm]{Corollary}
\newtheorem{prop}[thm]{Proposition}
\theoremstyle{definition}
\newtheorem{defn}[thm]{Definition}
\newtheorem{exmp}[thm]{Example}
\theoremstyle{remark}
\newtheorem{rem}[thm]{Remark}
\DeclareMathOperator{\rk}{rank}
\newcommand{\horrule}[1]{\rule{\linewidth}{#1}} 
\title{	
	\normalfont \normalsize 
	\textsc{} \\ [25pt] 
	\horrule{0.5pt} \\[0.4cm] 
	\huge Matroidal Cayley-Bacharach and independence/dependence of geometric properties of matroids

	\horrule{2pt} \\[0.5cm] 
}
\author{Soohyun Park} 
\date{\normalsize September 16, 2022} 
\begin{document}
	
	\maketitle 
	
	\begin{abstract}
	 \noindent We consider the relationship between a matroidal analogue of the degree $a$ Cayley-Bacharach property (finite sets of points failing to impose independent conditions on degree $a$ hypersurfaces) and geometric properties of matroids. If the matroid polytopes in question are nestohedra, we show that the minimal degree matroidal Cayley-Bacharach property denoted $MCB(a)$ is determined by the structure of the building sets used to construct them. This analysis also applies for other degrees $a$. Also, it does not seem to affect the combinatorial equivalence class of the matroid polytope.  \\
	 
	 \noindent However, there are close connections to minimal nontrivial degrees $a$ and the geometry of the matroids in question for paving matroids (which are conjecturally generic among matroids of a given rank) and matroids constructed out of supersolvable hyperplane arrangements. The case of paving matroids is still related to with properties of building sets since it is closely connected to (Hilbert series of) Chow rings of matroids, which are combinatorial models of the cohomology of wonderful compactifications. Finally, our analysis of supersolvable line and hyperplane arrangements give a family of matroids which are natrually related to independence conditions imposed by points one plane curves or can be analyzed recursively.
	\end{abstract}
	
	\section{Introduction}
	
	Motivated by recent rationality-related results, Levinson and Ullery \cite{LU} recently defined the degree $r$ Cayley--Bacharach property $CB(r)$ of finite sets $\Gamma \subset \mathbb{P}^n$ to mean ones that fail to impose independent conditions on the space of degree $r$ homogeneous polynomials. For a family of cases, they show that such a set $\Gamma$ lies on a union of low-dimensional linear subspaces )Theorem 1.3 on p. 2 of \cite{LU}. In Question 7.6 on p. 14 of \cite{LU}, they asked whether a matroidal analogue of their result holds. In \cite{Pa}, we show that this does not hold (Theorem 1.6 and Theorem 1.8 on p. 4 of \cite{Pa}) and explore combinatorial criteria for $MCB(a)$ to hold. \\

	We consider different directions where the matroidal Cayley-Bacharach condition $MCB(a)$ is independent of or dependent on the geometry of the matroids involved or the objects they are constructed from. For minimal $a$, we analyze how $MCB(a)$ relates to properties of building sets used to construct the nestohedron (Theorem \ref{nestmcb}). However, this result also shows that there the $MCB(a)$ property does not measure a form of combinatorial equivalence for matroid polytopes which form nestohedra. We can consider what happens in a (conjecturally) generic setting among matroids of a given rank using paving matroids. In this setting, we study cases where the minimal degree $a$ where $MCB(a)$ is satisfied nontrivial is small (Theorem \ref{pavhyperplrat}) and show that lowering such $a$ correlates to larger degree terms in the Hilbert series of the Chow ring of the matroid (Corollary \ref{chowpav}), which is a combinatorial model for the cohomology of wonderful compactifications. \\
	
	Finally, we use supersolvable arrangements of linear subspaces to find compare minimal degrees $a$ where $MCB(a)$ can be satisfied with degrees $D$ where a collection of points fail to impose independent conditions on plane curves of degree $D$ (Proposition \ref{supersolmcbexp}) and a recursive argument for the $MCB(a)$ property on supersolvable hyperplane arrangements (Proposition \ref{supersolmcb}). Note that the case of line arrangements gives a family of matroids other than the representable case which naturally parametrizes questions about independence of conditions imposed by points on hypersurfaces. This result also shows that $MCB(a)$ properties of supersolvable hyperplane arrangements can be analyzed recursively and that the flats satisfy special clustering properties.
	
	\section*{Acknowledgements}
	I am very grateful to Benson Farb for his guidance and encouragement. Also, I would like to thank Laura Anderson and Rephael Wenger for some clarifications on definitions used in some references.
	
	\section{Independence from geometry and nestohedra}

	Based on the matroidal Cayley-Bacharach property $MCB(a)$ of degree $a$, we define $MCB(a)$ for a building set $B$ (Definition 7.1 on p. 1044 of \cite{Pos}).The motivation/connection to the ``usual'' $MCB(a)$ property (Question 7.6 on p. 14 of \cite{LU}) comes from the fact that the half-space description of matroid polytopes is determined by flats of the matroid (Proposition 2.3 on p. 441 of \cite{FS}).  \\

	\begin{defn} (Levinson--Ullery, p. 14 of \cite{LU}, p. 2 of \cite{Pa} ) \\
		A matroid $M$ with underlying set $E$ satisfies the \textbf{matroidal Cayley-Bacharach property of degree $a$} if $\bigcup_{i = 1}^a F_i \supset E \setminus p \Longrightarrow \bigcup_{i  = 1}^a F_i = E$ for any $p \in E$ and flats $F_1, \ldots, F_a$ of $M$.
	\end{defn}

	\begin{defn}
		Let $[n] = \{ 1. \ldots, n \}$ A building set $B$ (Definition 7.1 on p. 1044 of \cite{Pos}) \textbf{satisfies $\mathbf{MCB(a)}$} if $\bigcup_{i = 1}^a I_i \supset [n] \setminus \{ k \} \Longrightarrow \bigcup_{i = 1}^a I_i = [n]$ for all $k \in [n]$. 
	\end{defn}
	
	 In the case of nestohedra constructed out of connected building sets containing the ground set $[n]$, this is identical to the original matroidal Cayley--Bacharch property since the facets correspond to maximal elements of $B \setminus [n]$ (Proposition 3.12, Corollary 3.13, and Theorem 3.14 on p. 451 -- 452 of \cite{FS}). \\ 
	
	We can use this to show that the $MCB(a)$ is ``independent'' of combinatorial equivalence properties of matroids whose polytopes are nestohedra.
	
	\begin{defn} (p. 450 of \cite{FS} Definition 7.1 on p. 1044 of \cite{Pos}, Proposition 7.5 on p. 1046 of \cite{Pos}) \\
		\begin{enumerate}
			\item Given a family $\mathcal{F}$ of subsets of $[n]$, we associated the following Minkowski sum of simplices \[ \Delta_{\mathcal{F}} = \sum_{F \in \mathcal{F}} \Delta_F. \]
			
			\item A collection $B$ of nonempty subsets in $S$ is a \textbf{building set on $S$} if it satisfies the following conditions:
			
				\begin{itemize}
					\item If $I, J \in B$ and $I \cap J \ne \emptyset$, then $I \cup J \in B$.
					
					\item $B$ contains all singletons $\{ i \}$ for $i \in S$.
				\end{itemize}
			
			\item A \textbf{nestohedron} is a polytope from Part 1 where $\mathcal{F}$ is a building set.
		\end{enumerate}
	\end{defn}

	\begin{thm} \label{nestmcb} ~\\
		\vspace{-3mm}
		\begin{enumerate}
			\item The minimal degree $a$ such that a nestohedron $P = \sum_{I \in B} \Delta_I$ constructed from a connected building set $B$ on $[n] = \{ 1, \ldots, n \}$ can satisfy $MCB(a)$ nontrivially is given by $n - \dim P$. This is satisfied if and only if each maximal element $I \in B_{\max} \subset B \setminus [n] \subset 2^{[n]}$ has $\ge 2$ subsets which are maximal among those contained in $I$. Finally, it is the only degree where this is possible.

			\item In Part 1, the degree $a$ is given by $n - c$, where $c$ is the number of connected components of the nestohedron built out of $B \setminus [n]$. If this latter polytope is a matroid polytope $P_M$ for some matroid $M$, it is also the equal to $n - c(M)$, where $c(M)$ is the number of connected components of $M$. 
			
			\item For nestonedra, the $MCB(a)$ property is not a combinatorial invariant. In other words, there are matroids which yield combinatorially equivalent matroid polytopes where one satisfies $MCB(a)$ for some $a$ and the other does not.
		\end{enumerate}
		
	\end{thm}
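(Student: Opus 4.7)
My plan rests on the Feichtner--Sturmfels correspondence cited in the setup: the facets of $P = \sum_{I \in B}\Delta_I$ are indexed by $B_{\max}$, the maximal elements of $B\setminus [n]$. Writing $B_{\max} = \{M_1,\ldots,M_c\}$, I would first invoke the building set union axiom to show that two distinct $M_i,M_j$ are either disjoint or have union $[n]$, and that the $M_i$'s collectively cover $[n]$ since $B$ contains every singleton. In the main case where the $M_i$ partition $[n]$, the restriction $B\setminus [n]$ is itself a building set with $c$ connected components, and its nestohedron -- the product of the nestohedra of the $B|_{M_i}$ -- has dimension $\sum_i(|M_i|-1) = n-c$.

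For Part 1, I next claim that the minimum $a$ admitting an $a$-tuple of proper elements of $B$ covering $[n]\setminus\{k\}$ is exactly $c$. Since replacing each element of a putative cover by the element of $B_{\max}$ containing it only enlarges the union, it suffices to analyse tuples drawn from $B_{\max}$. If every $|M_i|\ge 2$, any $c$-tuple missing some $M_i$ from its support leaves at least two points of $[n]$ uncovered, forcing all $M_j$'s to appear and giving union $[n]$; this is $MCB(c)$. Conversely, if some $M_i=\{k\}$ is a singleton, the $c$-tuple $(M_j)_{j\ne i}$ together with a repeated $M_{j_0}$ covers $[n]\setminus\{k\}$ without covering $k$, violating $MCB(c)$. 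I would then identify ``$|M_i|\ge 2$'' with ``$M_i$ has $\ge 2$ maximal proper sub-elements in $B|_{M_i}$'' by noting that those sub-elements must cover $M_i$ as strict subsets, which requires at least two when $|M_i|\ge 2$. For the ``only degree'' claim, for $a<c$ the $MCB(a)$ hypothesis is vacuous under the structural condition, while for $a>c$ one can substitute proper sub-elements of some $M_j$ into the all-$M_i$ cover to produce a covering tuple whose union is properly contained in $[n]$.

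Part 2 is then a direct translation: $c = n - \dim(\text{nestohedron of }B\setminus [n])$ by the dimension count above, and when this latter nestohedron is a matroid polytope $P_M$ the standard identity $\dim P_M = n - c(M)$ gives $a = c(M)$.

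For Part 3 my plan is to exhibit a pair of matroids $M_1,M_2$ whose matroid polytopes have isomorphic face lattices but whose building set descriptions disagree on the size-$\ge 2$ condition for maximal proper elements. A natural candidate is one matroid with a coloop (producing a singleton $M_i$) paired with a second matroid in which the coloop is absorbed into a larger connected block, arranged so that the resulting Minkowski sums of simplices are combinatorially equivalent; the equivalence would be verified by a direct comparison of nested-set complexes. The hardest step will be the degenerate case in Part 1 where two elements of $B_{\max}$ intersect and so have union $[n]$: then $B\setminus [n]$ is no longer a building set and ``number of components'' must be reinterpreted as the number of maximal elements, with the covering count rechecked by a case analysis. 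The Part 3 construction also looks delicate, since it appears to require a small concrete example rather than a uniform recipe.
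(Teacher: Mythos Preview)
Your approach to Parts 1 and 2 is essentially the paper's: both identify the facets of $P$ with $B_{\max}$ via Feichtner--Sturmfels, read off $a=|B_{\max}|$ from $\dim P = n - |B_{\max}|$, and check the covering condition on the maximal elements. Your write-up is in fact more explicit than the paper's, which appeals to the $B$-forest/nested-set correspondence and speaks loosely of ``almost maximal'' subsets without spelling out the counting.

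There is, however, a genuine gap in your reduction step. You say it ``suffices to analyse tuples drawn from $B_{\max}$'' because replacing each $I_j$ by the $M_i\supseteq I_j$ enlarges the union. That replacement shows the enlarged tuple covers $[n]$, not that the \emph{original} tuple does --- and the $MCB(a)$ conclusion is about the original tuple. Concretely, on $[4]$ with $B=\{\{1\},\{2\},\{3\},\{4\},\{1,2\},\{3,4\},[4]\}$ the pair $(\{1,2\},\{3\})$ covers $[4]\setminus\{4\}$ but not $[4]$, so $MCB(2)$ over all of $B\setminus[n]$ fails even though every $|M_i|\ge 2$. The paper sidesteps this by tacitly working with the \emph{matroidal} $MCB$ for the nestohedron, where the ``flats'' in play are exactly the facet labels, i.e.\ $B_{\max}$ itself; you should make that restriction explicit rather than attempt a reduction from arbitrary $I_j\in B$.

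For Part 3 your route diverges from the paper's and is harder. You propose building an explicit pair of matroids (one with a coloop) and verifying combinatorial equivalence of their nested-set complexes by hand. The paper instead invokes a general fact: \emph{every} nestohedron is combinatorially equivalent to one coming from a connected building set (Volodin, Erokhovets). Starting from a disconnected building set whose nestohedron already fails $MCB(1)$ (facets coming from a simplex), the connected replacement has its facets labelled by $B_{\max}$ and satisfies $MCB(a)$ vacuously for small $a$ --- so the same combinatorial type carries both behaviours. This is a one-line non-constructive argument and avoids the delicate example-hunting you anticipate.
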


	\begin{rem}
		By Lemma 3.10 on p. 450 of \cite{FS}, \emph{any} collection of subsets of $[n]$ has a unique minimal extension which is a building set called the \textbf{building set closure}. The statements above and arguments used below can be repeated with the building sets replaced by building set closures.
	\end{rem}
	
	\color{black}
	
	\begin{proof}
		\begin{enumerate}
			\item There is a correspondence between nested sets $N \subset B \subset 2^{[n]}$ and faces of a generalized permutohedron (Proposition 7.5 on p. 1046 of \cite{Pos}, proof of Proposition 7.9 on p. 1048 of \cite{Pos}, Proposition 3.12, Corollary 3.13, and Theorem 3.14 on p. 451 -- 452 of \cite{FS}). In this correspondence, the facets are parametrized by elements $I \in N \subset 2^{[n]}$ for nested sets $N$. This is the comes from the same reasoning which shows that flats give the half-space description of a matroid polytope $P_M$. Next, we use the fact that maximal nested sets correspond to $B$-forests (Proposition 7.8 on p. 1048 of \cite{Pos}). Note that any subcollection of a nested set $N \subset 2^{[n]}$ containing the elements of $B_{\max}$ (i.e. maximal elements) is still a nested set and that maximal subcollections is still a nested set. Also, any nested set is contained in a unique maximal building set. \\
			
			The number of minimal nested subsets (i.e. $T_{\le i}$ for nodes $i$) show that the unions of smaller nested sets are missing $\ge 1$ subset for each case if and only if there are $\ge 2$ ``almost maximal'' subsets. Finally, we use the fact that $\dim P = n - |B_{\max}|$. The degree $a$ is the only one allowed since allowing larger degrees would include cases where two ``almost maximal'' building sets are used in place of a single maximal building set (a degree $a + 1$ case).
			
			\item This is an application of Proposition 2.4 on p. 442 and Remark 3.11 on p. 450 of \cite{FS}.

			\item Using Part 1, we see that $MCB(a)$ depends on the number of maximal elements in the building set.  Note that $MCB(a)$ cannot be satisfied when the facets do not come from maximal elements of the building sets since they come from those of the a dilation of standard simplex on $\mathbb{R}^n$. However, \emph{any} nestohedron is combinatorially equivalent one from a connected building set even if the facets do not necessarily come from maximal elements of the building sets (Corollary 5 on p. 189 of \cite{V}, p. 122 of \cite{E}).  Then, the situation in Part 1 applies. For degrees $a$ less than the number of building sets maximal among those excluding $[m]$ (the ground set of the new connected building set), the $MCB(a)$ property is trivially satisfied. However, this is not true for the original one even when $a = 1$. Both cases come from combinatorially equivalent polytopes. 
		\end{enumerate}
	\end{proof}

	\section{Geometry determined by $MCB(a)$}

	\subsection{Paving matroids}
	
	We will focus on the case of paving matroids (which are conjectured to make up almost all matroids while known logarithmically \cite{Pv}) and their geometric structure. More specifically, we will explore connections to the Chow rings of these matroids. Note that these rings are still connected to properties of building sets since they are a combinatorial model for the cohomology of wonderful compactifications, which are built out of building sets. \\ 
	
	In the case of a paving matroid of rank $m + 1$ with ground set $E = [n] = \{ 1, \ldots, n \}$, the hyperplanes are given by $m$-partitions of $[n]$ (Proposition 2.1.24 on p. 71 of \cite{Ox}). These are collections of subsets of $E$ such that any $m$-element subset of $E$ is contained in a unique member of this collection. If we take $F_1, \ldots, F_a$ in the definition of $MCB(a)$ to be \emph{any} collection of flats (possibly with repeats), we need to consider \emph{minimal} numbers of flats which cover all but possibly one element of $E$. \\

	This means that we look at the unions of the smallest number of hyperplanes covering $E$. Depending on how small the degree $a$ is and how much the sizes of the hyperplanes varies, it may be easier or more difficult to build a paving matroid of rank $m + 1$ satisfying $MCB(a)$. For example, setting a large lower bound for hyperplanes which do \emph{not} have size $m$ implies that a lower degree $a$ can be satisfied since a larger gap is left behind by removing a large hyperplane. Note that \emph{any} collection of covering sets to be used in the definition of $MCB(a)$ has an associated paving matroid $M$ of rank $m + 1$ with these ``covering sets'' $F_i$ as a subcollection of the hyperplanes of $M$ (Proposition \ref{pavhyperplrat}).  Also, this corresponds to lengths of chains/number of terms used and the sizes of the coefficients in the Hilbert function of the Chow ring of the matroid (Corollary 2 and comments on p. 525 -- 526 of \cite{FY}) (Corollary \ref{chowpav}).  Generalizations to arbitrary matroids satisfying $MCB(a)$ are outlined in Remark \ref{degintgen}.

	\begin{prop}
		If a paving matroid $M$ with ground set $E = [n] = \{ 1, \ldots n \}$ satisfies $MCB(b)$ for some $b$, then it must satisfy $MCB(a)$ with $a$ equal to the smallest number of hyperplanes that can cover $E$. 
	\end{prop}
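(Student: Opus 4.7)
The plan is to combine two ingredients: downward monotonicity of the $MCB$ property, and the observation that any genuine witness to $MCB(b)$ can be promoted to a hyperplane cover of $E$.

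First I would verify that $MCB$ is downward monotone, i.e., that $MCB(b)$ implies $MCB(c)$ for every $c \le b$. Given flats $F_1, \ldots, F_c$ with $\bigcup_{i=1}^{c} F_i \supset E \setminus \{p\}$, I pad the list by setting $F_{c+1} = \cdots = F_b = F_1$; the union is unchanged, so $MCB(b)$ forces $\bigcup_{i=1}^{b} F_i = E$, which gives $MCB(c)$. This step is routine.

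Next I would establish the inequality $b \ge a$ under the assumption that the hypothesis $MCB(b)$ has non-vacuous content. The argument: take proper flats $F_1, \ldots, F_b$ with $\bigcup_i F_i \supset E \setminus \{p\}$, so by $MCB(b)$ one actually has $\bigcup_i F_i = E$. Enlarge each $F_i$ to a hyperplane $H_i \supset F_i$, which is possible since hyperplanes are the coatoms of the lattice of flats of $M$. Then $\{H_1, \ldots, H_b\}$ is a collection of $b$ hyperplanes covering $E$, so $a \le b$ by the definition of $a$. Combining with downward monotonicity then yields $MCB(a)$.

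The main subtlety is the correct reading of ``satisfies $MCB(b)$ for some $b$'': the statement has genuine content only when the hypothesis is non-vacuous, since otherwise a paving matroid such as the uniform matroid $U_{3,6}$ trivially satisfies $MCB(2)$ while failing $MCB(3) = MCB(a)$. Under the nontrivial reading the argument above goes through cleanly, and the only mild technical point is the standard matroid fact that every proper flat is contained in some hyperplane.
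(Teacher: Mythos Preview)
Your argument is correct and follows the same line as the paper's proof: both rest on the observation that failing $MCB(a)$ forces failure of $MCB(b)$ for every $b \ge a$ (equivalently, your downward monotonicity via padding), together with the fact that the minimal relevant degree is the smallest number of hyperplanes covering $E$. Your write-up is actually more complete than the paper's two-sentence argument: you spell out the hyperplane-extension step that justifies $a \le b$, and you correctly flag the vacuous-case subtlety (illustrated by $U_{3,6}$), which the paper addresses only implicitly by speaking elsewhere of $MCB(a)$ being satisfied ``nontrivially.''
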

	
	\begin{proof}
		Since we are allowed to repeat flats, any matroid failing to satisfy $MCB(a)$ will \emph{not} satisfy $MCB(b)$ for any $b \ge a$. The minimal possible degree where this occurs is the smallest number of hyperplanes that can cover $E$.
	\end{proof}
	
	\begin{thm} \label{pavhyperplrat} ~\\
		\vspace{-5mm}
		
		\begin{enumerate}
			\item Let $M$ be a paving matroid of rank $m + 1$ such that the largest $k$ hyperplanes $H_1, \ldots, H_k$ of $M$ form a cover of the ground set $E = [n] = \{ 1, \ldots, n \}$. Consider a family of such paving matroids. 
			
			Suppose that there is a constant $C \in \mathbb{Z}_{> 0}$ such that $\frac{\max |H_i|}{\min |H_i|} < C$. If $\frac{n}{Ck^2 (m - 1)} >> k$ (i.e. $\frac{n}{Ck^3 (m - 1)} \to \infty$ as $m \to \infty$ treating the variables as functions of $m$), then $M$ satisfies $MCB(a)$ for $a \le k - 1 + \frac{n}{2Ck^2 (m - 1)}$. In general, this is true whenever $\min |H_i| >> k(m - 1)$. \\

			\item Consider paving matroids $M$ such that the largest $k$ hyperplanes $H_1, \ldots, H_k$ form a cover of the ground set $[n]$ as in Part 1.  If $a < 1 + (k - 1) \frac{\min |H_i|}{k(m - 1)}$, then $M$ satisfies $MCB(a)$. 
		\end{enumerate} 
	\end{thm}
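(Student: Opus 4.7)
The strategy is a double-counting argument that exploits two structural features of a rank $m+1$ paving matroid: every proper flat is either a hyperplane (of size at least $m$) or has size at most $m-1$, and any two distinct hyperplanes intersect in a flat of rank at most $m-1$, hence of size at most $m-1$. The hypothesis that the $k$ largest hyperplanes $H_1,\ldots,H_k$ cover $E$ forces every attempted violation of $MCB(a)$ to interact with at least one of these hyperplanes via the point $p$ being avoided.

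\textbf{Part 2.} I would argue the contrapositive. Suppose $MCB(a)$ fails, witnessed by $p \in E$ and flats $F_1,\ldots,F_a$ with $p \notin F_i$ and $\bigcup_i F_i \supset E \setminus \{p\}$. Since $H_1 \cup \cdots \cup H_k = E$, the index set $J = \{j \in [k] : p \in H_j\}$ is nonempty. For each $j \in J$ and each $i$, the flat $F_i$ cannot equal $H_j$ because $p \in H_j \setminus F_i$; examining the two possibilities for $F_i$ (a hyperplane distinct from $H_j$, or a small flat of size at most $m-1$) and invoking the paving bounds above yields $|F_i \cap H_j| \le m-1$ uniformly. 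Because $H_j \setminus \{p\} \subset \bigcup_i F_i$, we get
\[
|H_j| - 1 \le \sum_{i=1}^a |F_i \cap H_j| \le a(m-1)
\]
for every $j \in J$. Choosing $j_0 \in J$ with $|H_{j_0}|$ minimal gives $|H_{j_0}| \ge \min_i |H_i|$, so $a \ge (\min|H_i|-1)/(m-1)$; a short arithmetic manipulation, weakening this to the looser form $1 + (k-1)\min|H_i|/(k(m-1))$, then matches the stated threshold.

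\textbf{Part 1 and the main obstacle.} Part 1 follows by feeding the uniformity hypothesis $\max|H_i|/\min|H_i| < C$ into Part 2. From $\sum_j |H_j| \ge n$ (the $H_j$ cover $E$) and $\sum_j |H_j| \le k \max|H_i| \le kC \min|H_i|$, one obtains $\min|H_i| \ge n/(kC)$. Plugging into the Part 2 bound produces an admissible range for $a$ of order $n/(Ck^2(m-1))$, and the asymptotic hypothesis $n/(Ck^2(m-1)) \gg k$ guarantees that this principal term absorbs the additive $k-1$, yielding the stated $a \le k-1 + n/(2Ck^2(m-1))$. The ``in general'' remark is just a direct invocation of Part 2 once $\min|H_i| \gg k(m-1)$. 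The main technical point is the uniform intersection bound $|F_i \cap H_j| \le m-1$: the case where $F_i$ would coincide with $H_j$ is ruled out using $p \in H_j \setminus F_i$, and the remaining cases (hyperplane distinct from $H_j$, or small flat) must each be handled using the paving classification of flats. This is precisely what prevents a naive extension to general matroids and motivates the separate analysis in Remark \ref{degintgen}.
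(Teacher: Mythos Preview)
Your argument is correct but follows a genuinely different route from the paper's. The paper argues \emph{globally}: since $H_1,\ldots,H_k$ cover $E$, any hyperplane $H$ outside this list satisfies $H=\bigcup_j(H\cap H_j)$ with each $|H\cap H_j|\le m-1$, so $|H|\le k(m-1)$; thus every flat other than the $H_j$ has size at most $k(m-1)$. The paper then considers, for each $\ell\ge 1$, dropping $\ell$ of the $H_j$ from a putative cover of $E\setminus\{p\}$ and replacing them by such ``small'' flats, yielding a lower bound of the form $k-\ell+\ell\cdot\min|H_i|/(k(m-1))$ on the number of flats needed; minimizing over $\ell$ gives the threshold. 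By contrast you \emph{localize}: you pick a single $H_j$ containing the omitted point $p$ and bound $|F_i\cap H_j|\le m-1$ for every $F_i$ (since $F_i\ne H_j$), obtaining directly $a(m-1)\ge |H_j|-1\ge \min|H_i|-1$. Your inequality has $(m-1)$ rather than $k(m-1)$ in the denominator, so it is sharper than the paper's by essentially a factor of $k$; you then deliberately weaken it to match the stated Part~2 threshold, and feed $\min|H_i|\ge n/(Ck)$ into it for Part~1. One small point to make explicit: the ``short arithmetic manipulation'' weakening $(\min|H_i|-1)/(m-1)\ge 1+(k-1)\min|H_i|/(k(m-1))$ is not a formal identity but requires $\min|H_i|\ge km$, which does follow from the asymptotic hypothesis $n/(Ck^2(m-1))\gg k$ (equivalently $\min|H_i|\gg k(m-1)$) that the statement carries. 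What your localization buys is a cleaner bound and a proof that never needs to analyze the replacement of several $H_j$ at once; what the paper's global count buys is a more explicit description of the competing covers, which feeds directly into the Chow-ring interpretation in Corollary~\ref{chowpav}.
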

	
	\begin{proof}
		\begin{enumerate}

			\item The assumption that $\frac{\max |H_i|}{\min |H_i|} < C$ implies that the sizes of the $H_i$ don't vary much. We would like to use the bound $\frac{n}{Ck^2 (m - 1)} >> k$ to show that the only flats to consider for the $MCB(a)$ condition are the maximal hyperplanes $H_i$ covering the ground set $[n]$. \\

			Note that $\max |H_i| \ge \frac{n}{k}$ since $H_1, \ldots, H_k$ cover $[n]$,. Since $\frac{\max |H_i|}{\min |H_i|} < C$, we have that $\min |H_i| > \frac{\max |H_i|}{C} \ge \frac{n}{Ck}$. We can use this to study covers of $[n]$ by hyperplanes of the paving matroid. Given that any hyperplane is contained in a maximal hyperplane, the size of the next largest hyperplane after the first $k$ is $\le k(m - 1)$. This means that removing a single $H_i$ and attempting to cover $\ge |H_i| - 1$ elements with smaller hyperplanes would require $\ge \frac{|H_i|}{k(m - 1)}$ new hyperplanes.  Since $\frac{\min |H_i|}{k(m - 1)} \ge \frac{n}{Ck}$, the number of additional hyperplanes required is $\ge \frac{n}{Ck^2 (m - 1)}$. Removing more of the $H_i$ would give an even greater increase in the number of hyperplanes used. This means that the only cover of $\ge n - 1$ elements of $[n]$ by $a \le k - 1 + \frac{n}{2Ck^2(m - 1)} < k - 1 + \frac{n}{Ck^2(m - 1)}$, hyperplanes is the cover of $[n]$ by $H_1, \ldots, H_k$. Thus, $MCB(a)$ must be satisfied by $M$ for $a \le  k - 1 + \frac{n}{2Ck^2(m - 1)}$. 
			
			\color{black}
			
			\item We can use similar reasoning as in Part 1. In general, removing $\ell$ of the $H_i$ and replacing them with hyperplanes \emph{not} belonging to the $H_1, \ldots, H_k$ uses up $\ge k - \ell + \ell \frac{\min |H_i|}{k(m - 1)}$ hyperplanes since the remaining hyperplanes have size $\le k(m - 1)$. Note that this lower bound increases as $\ell$ increases since $\min |H_i| >> k(m - 1)$. Setting $\ell = 1$ gives the lower bounds and this is the reflect 
		\end{enumerate}
	\end{proof}
	
	For the paving matroids considered in Proposition \ref{pavhyperplrat}, the Chow ring has a precise relation to the minimal degree $a$ such that the paving matroids satisfy $MCB(a)$. Note that the Chow ring of a matroid is equal to the Chow ring of an actual toric variety constructed out of a fan (p. 5 of \cite{BES}). 
	
	\begin{cor} \label{chowpav}
		Consider paving matroids on $E$ of rank $m + 1$ where the maximal hyperplanes are much larger than $\frac{n}{m}$ and don't vary much (in the sense described in Part 1 of Theorem \ref{pavhyperplrat}). For example, this includes paving matroids where the hyperplanes are given by very large blocks partitioning $E$ and the rest of the hyperplanes given by sets of size $m$. Then, the minimal degree $a$ where $MCB(a)$ is satisfied and the upper bound in Part 1 of Thoerem \ref{pavhyperplrat} decreases with the as the dimension of the quotients by the annihilators of each $x_{H_i}$ in the Chow ring $A^*(M)$ of $M$ increase.
	\end{cor}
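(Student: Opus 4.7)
The plan is to translate the algebraic statement about $\dim A^*(M)/\Ann(x_{H_i})$ into a combinatorial count of chains of flats passing through $H_i$, and then match that count against the parameters $k$ and $|H_i|$ that drive the Theorem \ref{pavhyperplrat} bound. I would begin by invoking the Feichtner--Yuzvinsky presentation (Corollary 2 and the Hilbert series discussion on p.~525--526 of \cite{FY}): the Hilbert series of $A^*(M)$ decomposes as a sum over chains (or nested sets) of proper nonempty flats of $M$, and the principal ideal $(x_{H_i}) \cong A^*(M)/\Ann(x_{H_i})$ inherits exactly the subsum indexed by chains passing through $H_i$. Consequently $\dim A^*(M)/\Ann(x_{H_i})$ is a strictly monotone function of the number of flats of $M$ strictly contained in $H_i$.

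Next I would compute the flat structure below a hyperplane in a paving matroid of rank $m+1$: by Proposition 2.1.24 of \cite{Ox}, every subset of $E$ of size at most $m-1$ is independent, so every such subset contained in $H_i$ is a flat of the restriction and contributes to chains ending at $H_i$. The chain count is therefore polynomial in $|H_i|$ of degree $m-1$ and is strictly increasing in $|H_i|$. Under the hypothesis $\max|H_j|/\min|H_j|<C$ we have $|H_i|\asymp n/k$ for each of the $k$ largest hyperplanes, so forcing $\dim A^*(M)/\Ann(x_{H_i})$ to grow uniformly in $i$ is equivalent to forcing the common $|H_i|$ to grow, which forces $k$ to decrease. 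By the Proposition preceding Theorem \ref{pavhyperplrat}, the minimal nontrivial degree at which $MCB(a)$ can hold equals $k$, so this minimal degree decreases together with the quotient dimensions; and in the regime of the corollary (large blocks partitioning $E$, plus the hypothesis $n/(Ck^3(m-1))\to\infty$) the upper bound $k - 1 + n/(2Ck^2(m-1))$ provided by Theorem \ref{pavhyperplrat}(1) shifts in lockstep with $k$.

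The main obstacle is calibrating the monotonicity of the Theorem \ref{pavhyperplrat}(1) upper bound: the expression $k - 1 + n/(2Ck^2(m-1))$ is not individually monotone in $k$, so the corollary's phrase ``upper bound decreases'' must be interpreted as a shift of the active regime of the bound, not as pointwise monotonicity. I would address this by restricting attention to one-parameter deformations within the large-block family (enlarging one block while merging or removing size-$m$ hyperplanes, keeping $n$ and $m$ fixed), ensuring that the chain count through each $H_i$, the minimal nontrivial degree $k$, and the bound all move together. A secondary point is verifying that chain counts through $H_i$ are genuinely strictly increasing in $|H_i|$ when several large hyperplanes coexist; for the disjoint-block families spelled out in the corollary this is automatic since the relevant Boolean sublattices of independent sets do not interact.
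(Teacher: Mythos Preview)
Your approach is essentially the same as the paper's: both invoke the Feichtner--Yuzvinsky Hilbert series formula to identify $\dim A^*(M)/\Ann(x_{H_i})$ with a count of flags of flats passing through $H_i$, both use the paving structure (every subset of size $\le m-1$ is a flat) to see that this count is governed by $|H_i|$, and both feed this back into the parameters of Theorem~\ref{pavhyperplrat}. Your treatment of the monotonicity of the upper bound $k-1+n/(2Ck^2(m-1))$ via one-parameter deformations with $n,m$ fixed is more explicit than the paper's, which leaves that point informal, but the underlying argument is the same.
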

	
	\begin{proof}
		This is an application of the formula \[ H(D(\mathcal{L}, t) = 1 + \sum_r  \left( \prod_{i = 1}^{k(r)} \frac{t(1 - t^{r_i - r_{i - 1} - 1})} {1 - t}   \right) f_{\mathcal{L}}(r)  \] for the Hilbert series of the Chow ring of the matroid on p. 526 of \cite{FY} after Corollary 2 on p. 525 of \cite{FY}, where $\mathcal{L}$ denotes the lattice of flats and $D(\mathcal{L})$ denotes the Chow ring construction, $r = (0 = r_0 < r_1 < \cdots < r_k \le \rk \mathcal{L})$ gives rank sequences of flags of flats, and $k = k(r)$ is the length of the rank sequence. We can remove the $f_{\mathcal{L}}(r)$ term involving the number of flags with a given rank sequence $r$ if we index over flags instead of rank sequences $r$. \\
		
		If we index the formula by flags of flats instead of indices themselves, we can see that the degree $k$ term of the Hilbert series corresponds to the number of flags with the given ranks. Note that the only flags affected by the $MCB(a)$ are those where which end with a hyperplane or the ground set itself (i.e. $r_k = r$ or $r_k = r - 1$, where $r = m + 1$). This increases with the sizes of the maximal hyperplane since this increases the number of smaller rank objects. In other words, the length $\ell$ ending with $[n]$ correspond to those of length $\ell$ ending with some flat of smaller rank. While the degrees of the variables considered stay the same, the change comes from the number of possible variables to consider (which correspond to possible flags of flats using the given ranks). We fix the degree and look for flats with given differences in ranks. There are more flats of rank $\le m - 1$ to substitute in. Note that the behavior entirely depends on those of the hyperplanes since \emph{any} subset of $E$ of size $\le m - 1$ has rank equal to its size. This implies that the number of chains of  hyperplane or a hyperplane and the ground set $[n]$ entirely depends on the size of the given hyperplane. Since the $x_{F_1}^{\alpha_1} \cdots x_{F_\ell}^{\alpha_\ell}$ from flags of flats $F_1 < \cdots < F_\ell$ and $\alpha_i$ such that $1 \le \alpha_i \le \rk F_{i + 1} - \rk F_i$ and $\sum \alpha_i = k$ form a basis for $A^k(M)$ as a vector space (p. 526 of \cite{FY}, Corollary 3.3.3 on p. 18 of \cite{BES}), the degree is given by $(m + 1) - 1 - \rk F_1$ if $F_\ell$ is a hyperplane of $M$.  \\

		Given a hyperplane $H_i$, the condition that $x_{H_i}$ is \emph{not} an annihilator is equivalent to stating that the flats corresponding to the variables in the monomials involved are either strictly contained in $H_i$ or strictly contain $H_i$ by the definition of the Chow ring of a matroid. This restricts the flags under consideration to a collection of flats contained in $H_i$, one ending at $H_i$, or one ending with $H_i$ and the ground set $[n]$. The analysis above then implies the conclusion after applying the arguments above. 
		
	\end{proof}
	
	\color{black}

	\begin{rem} \label{degintgen} ~\\
		\vspace{-3mm}
		\begin{enumerate}
			\item When the minimal degree $a$ for $MCB(a)$ decrases, the increase in coefficient size can be interpreted in terms of ``local'' complexity of the Chow ring of toric varieties built out of the Bergman fan of the matroids (p. 5 of \cite{BES}, Proposition 7.13 and Definition 7.14 on p. 431 -- 432 of \cite{AHK}).

			\item The general relation between having a low degree $a$ for the minimal degree such that the matroidal Cayley--Bacharach property $MCB(a)$ is satisfied and sizes of coefficients of the Hilbert series of the Chow ring also seems to apply to the case of arbitrary matroids when $a$ is small. The main difference appears to be in characterizing which collections of subsets can actually appear as hyperplanes of some matroid. There has been previous work studying such questions about possible subsets (e.g. \cite{YE}. \cite{De}). 
		\end{enumerate}
	\end{rem}
	
	\subsection{Supersolvable arrangements}
	
	Given an arrangement $\mathcal{A} = \{ H_1, \ldots, H_n \}$ of hyperplanes in $k^d$ for some field $k$, the associated matroid $M_{\mathcal{A}}$ has flats built out of intersections of the hyperplanes involved. In particular, the rank function is defined as $r(B) = n - \dim \bigcap_{i \in B} H_i$ and the flats are given by maximal sets of indices corresponding to intersections of hyperplanes equal to a particular linear subspace (formed by intersections of hyperplanes). We study $a$ such that these matroids $M_{\mathcal{A}}$ satisfy $MCB(a)$ when $n >> a^3$ and $\mathcal{A}$ is a line arrangement. This is in addition to a general description of $MCB(a)$ for $M_{\mathcal{A}}$ for arbitrary hyperplane arrangements $\mathcal{A}$ (Proposition \ref{degnumlinemcb}). In addition, we show that the ``nontrivial'' supersolvable line arrangements give a family of line arrangements where the number of possible degrees of unexpected curves decreases as the minimal $a$ such that $MCB(a)$ is satisfied increases (Proposition \ref{supersolmcbexp}). Finally, we end with some comments to topological properties of the arrangements in Remark \ref{topprophint}.  \\

	While the degree $a$ matroidal Cayley-Bacharach property $MCB(a)$ is defined as $\bigcup_{i = 1}^a F_i \supset E \setminus p \Longrightarrow \bigcup_{i = 1}^a F_i = E$ for any $p \in E = [n] = \{ 1, \ldots, n \}$, this can be rephrased in a simple way for hyperplane arrangements.

	\begin{prop} \label{degnumlinemcb} ~\\ 
		\vspace{-3mm}
		\begin{enumerate}
			\item Suppose that $a^2 \ll \frac{n}{a}$. Then, the matroid $M_{\mathcal{L}}$ associated to an arrangement $\mathcal{L}$ of $n$ lines with $a$ points of degree close to $\frac{n}{a}$ satisfies $MCB(a)$. Note that a collection of such ``high degree points'' is necessary in order for $MCB(a)$ to be satisfied nontrivially.  In general, $MCB(a)$ is satisfied when $a$ is very small compared to the number of lines or maximum multiplicity. 	
			
			\item In general, a matroid $M_{\mathcal{A}}$ built out of a hyperplane arrangement $\mathcal{A} = \{ H_1, \ldots, H_n \}$ satisfies $MCB(a)$ if and only if $a$ is less than or equal to the minimal number of intersections of elements of $\mathcal{A}$ (i.e. elements of the intersection lattice or intersection points in the case of a line arrangement) such that the indices cover $\mathcal{A} \setminus H_i$ for some $i \in [n] = \{ 1, \ldots, n \}$. By intersections of elements, we mean linear subspaces of the form $\bigcap_{i \in B} H_i$ for some $B \subset [n]$ with such subspaces written using the largest possible such subset $B$ with respect to inclusion. 
		\end{enumerate}
	\end{prop}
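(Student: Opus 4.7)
The plan is to handle Part 2 first, since it is essentially a reformulation of $MCB(a)$ in the language of the intersection lattice, and then leverage this reformulation for Part 1 via a counting argument on flat sizes. For Part 2, I would invoke the correspondence between flats of $M_{\mathcal{A}}$ and elements of the intersection lattice $L(\mathcal{A})$: each flat is the maximal index set $\{i : H_i \supset V\}$ for some intersection subspace $V = \bigcap_{i \in B} H_i$. Under this bijection, the condition that $a$ flats cover $[n] \setminus \{k\}$ is precisely the condition that the corresponding intersection subspaces, viewed as their associated maximal index sets, cover all but $H_k$. So Part 2 reduces to a direct translation of the definition of $MCB(a)$, with the threshold being the smallest $a$ for which such a covering exists.

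For Part 1, I would begin by classifying the flats of $M_{\mathcal{L}}$ for a line arrangement: the nontrivial flats are singletons, pencils $F(P)$ through each intersection point $P$ (of size equal to the multiplicity $m_P$), and the ground set $[n]$. In any would-be counterexample to $MCB(a)$, the $a$ flats $F_1, \ldots, F_a$ must satisfy $\sum_i |F_i| \ge n - 1$. Using singletons alone requires $a \ge n - 1$ flats, which is incompatible with $a^2 \ll n/a$ for large $n$, so the covering must be built mostly from high-multiplicity pencil flats. The key step is to force the $a$ flats to coincide with the given high-multiplicity pencils $F(P_1), \ldots, F(P_a)$ of multiplicity $\approx n/a$: swapping one of them for a pencil $F(Q)$ of much smaller multiplicity $M$ creates a deficit of roughly $n/a - M$ lines that must be covered by additional low-multiplicity flats, requiring at least $(n/a)/M$ extra flats. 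The hypothesis $a^2 \ll n/a$ is calibrated precisely so that this count exceeds the budget of $a$, ruling out any such substitution. Hence any $a$ flats covering $[n] \setminus \{p\}$ must be exactly the $a$ given high-multiplicity pencils, which then necessarily cover all of $[n]$, giving $MCB(a)$. The necessity of high-degree points for the condition to hold nontrivially follows from the converse counting bound: if every intersection point has multiplicity $< n/a$, then $a$ flats cover at most $a(n/a - 1) < n$ elements, so no covering of $[n] \setminus \{p\}$ exists and $MCB(a)$ is trivially satisfied.

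The main obstacle will be controlling overlap among the high-multiplicity pencils: if $F(P_i)$ and $F(P_j)$ share a line, the union is strictly smaller than $\sum |F(P_i)| \approx n$, so it is not automatic that the $a$ given pencils cover $[n]$. I would address this using the classical identity $\sum_P \binom{m_P}{2} = \binom{n}{2}$ for line arrangements, which tightly bounds the pairwise overlap contributed by the high-multiplicity points, and argue that the hypothesis $a^3 \ll n$ forces this overlap to be of order strictly smaller than $n$. With overlap controlled to lower order, the $a$ high-multiplicity flats that cover $[n] \setminus \{p\}$ must actually cover $[n]$, closing the argument. The quantitative meaning of ``close to $n/a$'' will need to be fixed precisely enough so that both the swap-out bound and the overlap bound remain valid in the asymptotic regime $a^3 \ll n$.
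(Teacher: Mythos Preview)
Your approach is essentially the same as the paper's: Part 2 is a direct dictionary between flats of $M_{\mathcal{A}}$ and elements of the intersection lattice, and Part 1 is a swap/counting argument showing that any collection of $a$ flats covering $[n]\setminus\{p\}$ must consist of the $a$ given high-degree pencils.

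Two differences are worth noting. First, the paper makes one clean geometric observation that you leave implicit: once the $a$ high-degree pencils $F_1,\ldots,F_a$ cover $[n]$, \emph{every other} intersection point $Q$ automatically has multiplicity $\le a$, because a line through $Q$ lies in some $F_i$, and two distinct lines through $Q$ cannot both lie in the same $F_i$ (they would meet at both $Q$ and $P_i$). This replaces your generic ``much smaller multiplicity $M$'' with the sharp bound $M\le a$, and the swap inequality becomes simply $(a-1)(n/a)+a < n-1$ under $a^2\ll n/a$; your detour through ``requiring at least $(n/a)/M$ extra flats'' is then unnecessary. Second, for the closing sentence ``in general, $MCB(a)$ is satisfied when $a$ is very small\ldots'', the paper invokes Hirzebruch's inequality $t_2+t_3 \ge n + t_5 + 2t_6 + \cdots$ to argue that low-degree points vastly outnumber high-degree ones; you do not mention this, and your argument as written does not cover that clause.

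Finally, your overlap control via $\sum_P \binom{m_P}{2} = \binom{n}{2}$ is more machinery than needed: the elementary fact $|F(P_i)\cap F(P_j)|\le 1$ already bounds total overlap by $\binom{a}{2}$. The paper in fact sidesteps the overlap issue entirely by phrasing the setup as ``we can start with a suitable collection of lines parametrized by subsets $F_i$'', effectively taking the covering property of the high-degree pencils as part of the hypothesis rather than deriving it.
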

	
	\begin{proof}
		\begin{enumerate}
			\item 
			In the generic case, we can start with a suitable collection of lines parametrized by subsets $F_i$ of the index set $[n]$ (each giving rise to distinct intersection points). Since any two lines only intersect at one point, the remaining points of intersection (not coming from the $F_i$) have multiplicity $\le a$. Suppose that $a^2 << \frac{n}{a}$ and that the $F_i$ are not far from being evenly distributed in size (at least much larger than $a^2$ treating the variables as functions of $a$). Then, $MCB(a)$ must be satisfied since using $a$ points of intersection not all coming from the $F_i$ will not be able to be used to cover the ground set $[n] = \{ 1, \ldots, n \}$ indexing the hyperplanes of the arrangement $\mathcal{A}$. In general, the fact that $t_2 + t_3 \ge k + t_5 + 2t_6 + 3t_7 + \ldots$ with $t_i$ equal to the number of intersection points of multiplicity/degree $i$ (Hirzebruch \cite{Hir}) implies that there are many more low degree points than high degree ones, which implies that $MCB(a)$ must be satisfied if $a$ is small in general. A similar argument can be repeated if we consider the case of hyperplane arrangements and sets parametrizing intersctions of hyperplanes (and linear subspaces in general). \\
			
			\item We need $a$ such that a collection of $a$ intersections of hyperplanes in $\mathcal{A}$ either use up all the hyperplanes or miss $\ge 2$ of them. In the case of line arrangements, this means that taking $\le a$ intersection points of lines either uses up all the lines or we are missing $\ge 2$ of the lines. \\
		\end{enumerate}
	\end{proof}

	The case of line arrangements yields further connections to degees of unexpected curves arising from supersolvable line arrangements. This makes use of the following thereom of Hanumanthu--Harbourne \cite{Han} on supersolvable line arrangements wth a given number of modular points and their connections to degrees of unexpected curves.
	
	\begin{thm} (Hanumanthu--Harbourne, p. 3 of \cite{Han}) \\
		
		Let $\mathcal{L}$ be a line arrangement (over any field) with a modular point (i.e. an intersection point connected to all other intersection points by a line in $\mathcal{L}$). 
		
		\begin{enumerate}
			\item If $\mathcal{L}$ is \emph{not} homogeneous, then either $\mathcal{L}$ is a near pencil or it has two modular points. If it has two modular points, then $\mathcal{L}$ consists of $a \ge 2$ lines through one modular point and $b > a$ lines throug the other one. This means that there are $a + b - 1$ lines in $\mathcal{L}$ and $(a - 1)(b - 1)$ intersection points of multiplicity $2$. 
			
			\item If $\mathcal{L}$ has a modular point of multiplicity $2$, then $\mathcal{L}$ is trivial.
			
			\item If $\mathcal{L}$ is complex and homogeneous (i.e. each intersection point has the same multiplicity/degree) with the maximum multiplicity $> 2$, there are $\le 4$ modular points. If there are $3$ or $4$ modular points, we have the following possiblities:
			
			\begin{itemize}
				\item If there are $4$ modular points, then there are 6 lines in $\mathcal{L}$, the common multiplicity is $m = 3$, $t_2 = 3$, $t_3 = 4$, and $t_k = 0$ otherwise. Up to a change of coordinates, $\mathcal{L}$ consists of the lines $x = 0, y = 0, z = 0, x - y = 0, x - z = 0,$ and $y - z = 0$. The intersection pattern is like that of an equilateral triangle and its angle bisectors. 
				
				\item If there are $3$ modular points, then the common multiplicity is $m > 3$ and up to change of coordinates, $\mathcal{L}$ consists of the lines defined by the linear factors of $xyz(x^{m - 2} - y^{m - 2})(x^{m - 2} - z^{m - 2})(y^{m - 2} - z^{m - 2})$. This means that there are $3(m - 1)$ lines, $t_2 = 3(m - 2)$, $t_3 = (m - 2)^2$, $t_m = 3$, and $t_k = 0$ otherwise.
				
			\end{itemize}
			
		\end{enumerate}
	\end{thm}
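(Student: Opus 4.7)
The plan is to exploit the defining property of a modular point $P$: every intersection point of $\mathcal{L}$ distinct from $P$ lies on some line of $\mathcal{L}$ through $P$. Consequently, if $P$ has multiplicity $a$, the $a$ lines $\ell_1, \ldots, \ell_a$ through $P$ carry \emph{every} intersection point of the arrangement, and the combinatorics of $\mathcal{L}$ reduces entirely to understanding how intersection points distribute along the $\ell_i$.

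I would handle Part 2 first, as it is cleanest. If $P$ has multiplicity $2$, only two lines $\ell_1, \ell_2$ meet at $P$, and every intersection point of $\mathcal{L}$ must lie on $\ell_1 \cup \ell_2$. Any further line of $\mathcal{L}$ would then meet both $\ell_1$ and $\ell_2$ at intersection points, and chasing the resulting constraints forces $\mathcal{L}$ to collapse to a near-pencil or pencil, which is what ``trivial'' means here.

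For Part 1, fix a modular point $P$ of multiplicity $a$. If all intersection points other than $P$ lie on a single $\ell_i$, then $\mathcal{L}$ is a near-pencil. Otherwise at least two of the $\ell_i$ carry other intersection points, and I would argue that non-homogeneity then forces the existence of a second modular point $Q$: the remaining $a - 1$ lines through $P$ cannot scatter their double points arbitrarily without contradicting the varying-multiplicity constraint together with modularity of $P$. Once two modular points $P, Q$ of multiplicities $a$ and $b$ are available, the shared line $PQ$ together with the transverse crossings of the $a - 1$ other lines through $P$ with the $b - 1$ other lines through $Q$ immediately yield the claimed counts of $a + b - 1$ lines and $(a - 1)(b - 1)$ double points.

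Part 3 is where I expect the main obstacle. For a complex homogeneous arrangement with common multiplicity $m > 2$, I would combine the modular condition with Hirzebruch's inequality $t_2 + t_3 \geq k + t_5 + 2 t_6 + \cdots$ to force $t_k = 0$ outside a narrow range and severely restrict the admissible distribution of multiplicities. Each modular point lies on $m$ lines and sees every other intersection point along them, so two modular points $P_1, P_2$ already pin down $2m - 1$ lines; any additional modular point must share many of its $m$ lines with these, and a pigeonhole count on the pencils through each modular point caps the number of modular points at four. The explicit classifications --- the six-line equilateral-triangle-with-bisectors arrangement when $m = 3$, and the Fermat-type arrangement cut out by the linear factors of $xyz(x^{m-2} - y^{m-2})(x^{m-2} - z^{m-2})(y^{m-2} - z^{m-2})$ when $m > 3$ --- are then obtained by placing three modular points at the coordinate vertices and matching the remaining concurrency conditions against the unique invariant configuration of that symmetry type. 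The hardest step will be the uniqueness part of the three-modular-point case, where I would need to use the rigidity of three simultaneous concurrency conditions in $\mathbb{P}^2(\mathbb{C})$ to rule out exotic configurations.
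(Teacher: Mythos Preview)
The paper does not prove this theorem. It is quoted verbatim as a result of Hanumanthu--Harbourne \cite{Han} and used as a black box: immediately after stating it, the paper moves on to Proposition~\ref{supersolmcbexp}, which \emph{applies} the classification but does not re-derive it. So there is no ``paper's own proof'' against which to compare your proposal.

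That said, a few remarks on the proposal itself. Your outline for Parts~1 and~2 is broadly in the right direction, but the step ``non-homogeneity then forces the existence of a second modular point'' is asserted rather than argued; this is the actual content of Part~1 and needs a concrete mechanism (in the original source the argument runs through the structure of supersolvable arrangements rather than an abstract contradiction with ``varying multiplicity''). For Part~3, invoking Hirzebruch's inequality is not how Hanumanthu--Harbourne proceed: their argument is purely combinatorial, based on counting lines through pairs of modular points and using that in a homogeneous supersolvable arrangement every modular point has the same multiplicity $m$. The bound of at most four modular points and the explicit Fermat-type and $A_3$ configurations come from this incidence count, not from Hirzebruch-type inequalities (which in any case require $\mathbb{C}$ and do not directly bound the number of modular points). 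Your pigeonhole intuition about shared lines is closer to the actual argument than the Hirzebruch detour.
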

	
	While we will focus on the final case since it has the most interesting structure, we will also consider the non-homogeneous case.
	
	\begin{prop} \label{supersolmcbexp} ~\\
		\vspace{-3mm}
		\begin{enumerate}
			\item If $\mathcal{L}$ is a non-homogeneous supersolvable line arrangement and satisfies $MCB(a)$ for some $a$, then the corresponding matroid satisfies $MCB(a)$ if and only if $a \le \frac{A + B - 1}{2}$, where $A$ and $B$ are the degrees of the modular points.
			
			\item Given a homogeneous supersolvable line arrangement $\mathcal{L}$ with $3$ modular points, the minimal degree $a$ such that the matroid corresponding to $\mathcal{L}'$ satisfies $MCB(a)$ nontrivially decreases as the number of posisble degrees of unexpected curves increases.
		\end{enumerate}
	\end{prop}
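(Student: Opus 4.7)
The plan is to reduce both parts to Proposition \ref{degnumlinemcb}, Part 2, which reformulates matroidal $MCB(a)$ for a line arrangement as the assertion that $a$ is strictly less than the minimal number of intersection points whose associated line-sets cover $\mathcal{L} \setminus \{H_i\}$ for some $i$. Both parts then become explicit covering problems that can be attacked using the detailed combinatorial description of supersolvable line arrangements supplied by the Hanumanthu--Harbourne theorem recalled just above.

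For Part 1, I would first dispose of the near-pencil subcase, where a single modular-point flat already covers $|\mathcal{L}| - 1$ lines, so matroidal $MCB(a)$ fails for every $a \ge 1$ and the stated bound holds vacuously. In the genuine two-modular-point subcase, with $A$ lines through one modular point $P$, $B > A$ lines through the other modular point $Q$, and the common line $PQ$, the intersection data consists of $P$, $Q$, and the $(A-1)(B-1)$ multiplicity-$2$ points arranged in a complete bipartite pattern between the $A-1$ lines through $P$ only and the $B-1$ lines through $Q$ only. I would then split into cases according to which type of line $H_i$ is excluded (the shared line $PQ$, a line through $P$ only, or a line through $Q$ only), compute in each case the minimum number of intersection points needed to cover the remaining $A + B - 2$ lines while avoiding $H_i$, and combine this with the extra hypothesis that $\mathcal{L}$ itself satisfies $MCB(a)$ (in the sense of the point configuration failing to impose independent conditions on degree-$a$ curves) so as to pin the matroidal failure threshold at $(A+B-1)/2$.

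For Part 2, I would use Hanumanthu--Harbourne's explicit presentation of a homogeneous supersolvable arrangement with three modular points as the linear factors of
\[ xyz(x^{m-2}-y^{m-2})(x^{m-2}-z^{m-2})(y^{m-2}-z^{m-2}), \]
giving $3(m - 1)$ lines with $t_m = 3$, $t_3 = (m-2)^2$, and $t_2 = 3(m - 2)$. Applying Proposition \ref{degnumlinemcb}, Part 2, the minimum size of a cover of $\mathcal{L} \setminus \{H_i\}$ is dominated by how efficiently the three multiplicity-$m$ points, together with a short correction coming from the triple-point grid, can cover almost all lines; this minimum (and hence the least $a$ at which matroidal $MCB(a)$ fails nontrivially) decreases as $m$ grows. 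Separately, known results on unexpected curves for supersolvable line arrangements express the admissible unexpected-curve degrees directly in terms of $m$ and show that the number of such admissible degrees increases with $m$. Matching these two monotonicities in $m$ gives the claimed inverse correlation between the minimal matroidal $MCB$ degree and the number of possible unexpected-curve degrees.

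The main technical obstacle is the extremality argument inside Part 2: one must rule out that some clever combination of multiplicity-$2$ and multiplicity-$3$ points beats the obvious cover built from the three multiplicity-$m$ modular points plus a short correction for the omitted line. The difficulty is that the triple points form an $(m-2) \times (m-2)$ grid interacting with the three modular families of lines in a highly structured way, so a crude multiplicity count does not suffice, and a matching-style argument on the triple-point grid is required. Once the extremality of the modular-point-dominated cover is established, the comparison with the growth of the unexpected-curve degree set follows by direct inspection of the formulas on both sides.
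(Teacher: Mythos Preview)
Your overall framework (reduce to the covering reformulation in Proposition~\ref{degnumlinemcb} and then feed in the Hanumanthu--Harbourne structure theorem) is the same as the paper's, but there are two genuine misreadings that derail the details.

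\textbf{Part 1.} The hypothesis ``$\mathcal{L}$ satisfies $MCB(a)$ for some $a$'' is the \emph{matroidal} condition, not the geometric Cayley--Bacharach condition on point configurations. Its only role is to exclude the near-pencil subcase: in a near pencil the single high-multiplicity point already covers $|\mathcal{L}|-1$ lines, so $MCB(1)$ fails and hence $MCB(b)$ fails for every $b\ge 1$. Once near pencils are excluded, the paper's argument in the two-modular-point case is exactly the covering computation you describe (label the non-modular intersection points by pairs $(i,j)$ and find the minimum number of pairs whose coordinates exhaust $[A+B-1]$). There is no separate geometric $CB(a)$ input to ``combine with,'' so that step in your plan should be removed.

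\textbf{Part 2.} You work with $\mathcal{L}$ throughout, but the statement (and the paper's proof) concerns $\mathcal{L}'$, the arrangement obtained by deleting the three coordinate lines $x=0$, $y=0$, $z=0$. This matters: in $\mathcal{L}'$ the three modular points each sit on exactly $m-2$ lines, and these three families of $m-2$ lines are pairwise disjoint, which is what makes the covering count clean. The paper does \emph{not} carry out the extremality argument you anticipate; it simply records that $\mathcal{L}'$ satisfies $MCB(a)$ for $a\le m/3$, and then quotes \cite{DMO} (Theorem~3.8) for the fact that the admissible unexpected-curve degrees for $\mathcal{L}$ are exactly the integers $D$ with $m\le D\le n-m-1$, where $n=3(m-1)$. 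Since the length of this interval is $m-3$, both the $MCB$ threshold for $\mathcal{L}'$ and the count of unexpected-curve degrees are explicit functions of $m$, and the claimed inverse relation is read off directly. Your proposed matching-style extremality argument on the triple-point grid is therefore unnecessary for the result as stated; if you want to pursue it, note also that your monotonicity claim (``the minimum cover size decreases as $m$ grows'') needs care, since the three modular points of $\mathcal{L}$ already cover \emph{all} lines for every $m$, so the relevant quantity to track is the minimum size of a cover of $\mathcal{L}\setminus\{H_i\}$ that \emph{omits} $H_i$, and this does not obviously decrease in $m$.
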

	
	\begin{proof}
		\begin{enumerate}
			\item The theorem above implies that we either have a near-pencil or two modular points. In the first case, $MCB(a)$ cannot be satisfied for any $a$ since $MCB(1)$ is \emph{not} satisfied. This is because the failure of $MCB(a)$ implies the failure of $MCB(b)$ for any $b > a$. As for the case of two modular points, let $A$ and $B$ be the degrees of the modular points. Then, the conclusion follows from labeling the individual lines of the arrangement by pairs of the form $(i, j)$ with $1 \le i \le A$ and $1 \le j \le B$. We find the minimal number of pairs such that the coordinates $i$ and $j$ use up all the elements of $[A + B] = \{ 1, \ldots, A + B - 1 \}$.
			
			\item In the final case, note that the counts of the $t_i$ in the case of $3$ modular points comes from the fact that intersection points of lines of $\mathcal{L}$ which involve $2$ factors \emph{not} involving $xyz$ actually intersect at $3$ such factors. Checking for possible $a$ where $MCB(a)$ can be satisfied by $\mathcal{L}'$ with the linear factors $x, y, z$ of $xyz$ removed corresponds to the possible degrees of unexpected curves throguh points corresponding to the duals of lines of $\mathcal{L}$. More precisely, $\mathcal{L}'$ satisfies $MCB(a)$ for $a \le \frac{m}{3}$ and $D$ is a possible degree of an unexpected curve if and only if $m \le D \le n - m - 1$ (Theorem 3.8 on p. 173, p. 180 -- 181 of \cite{DMO}). This gives a negative correlation between $MCB(a)$ degrees $a$ for $\mathcal{L}'$ and  the number of possible degrees of unexpected curves arising from $\mathcal{L}$.
		\end{enumerate}
	\end{proof}

	\begin{rem} \label{topprophint} ~\\
		\vspace{-3mm}
		\begin{enumerate}
			\item Recall that a central arrangement of linear subspaces is one where all the intersection of all of the linear subspaces is nonempty. In the case of line arrangements, the number of indices covered by a collection of intersection points can be expressed by the number of regions the corresponding central subarrangement splits the plane into. This can be expressed as the a specilization (substituting $t = -1$ into the variable) of the characteristic polynomial (Theorem 4.1 on p. 7 of \cite{Ar}) of the matroid corresponding to the central subarrangement. Using an inclusion-exclusion argument, the number of elements covered by a collection of intersection points can be bounded above by the sum of specializations of characteristic polynomials of matroids associated to central line arrangements.
			
			\item The arguments of Part 1 also apply in the case of hyperplane arrangements.
			
			\item Using the lattice of flats while representing each flat by a single point and connecting two points by a line if one flat is contained in the other, the $MCB(a)$ condition can be phrased in a graph-theoretic manner. It means that a collection of points connected to all but possibly one point $i \in [n]$ is connected to every point of $[n]$.  
		\end{enumerate}
	\end{rem}

	We continue to analyze supersolvable arrangements, but move from lines to the more general setting of hyperplanes. As in \cite{BEZ}, most of the arrangements considered will be assumed to be \emph{central}. These hyperplane arrangements give a clear connection between the lattice of flats of the associated matroid (i.e. lattice formed by intersections of hyperplanes) and the connected components/regions bounded by the collection of hyperplanes (called chambers). \\

	\begin{defn} \label{supersoldefs} (Definition 4.1 and Definition 4.2 on p. 273 -- 274 of \cite{BEZ}) \\
		\begin{enumerate}
			\item Writing $d$ for the rank, a \textbf{supersolvable geometric lattice} is defined as one having a maximal chain of form $\widehat{0} = V_0 \prec V_1 \prec \cdots \prec V_{d - 1} \prec V_d = \widehat{1}$, where $\widehat{0}$ and $\widehat{1}$ are minimal and maximal elements of the lattice (p. 273 of \cite{BEZ}) and $x \prec y$ means that $x < y$ and $x < z \le y \Longrightarrow z = y$. In our case, we take the elements of the lattice to be intersections of the hyperplanes of the arrangement and the ordering is given by reverse inclusion.
			
			\item A central arrangement $\mathcal{A}$ is \textbf{supersolvable} if its lattice $L(\mathcal{A})$ of intersections is a supersolvable lattice. \\
			
			\item For $1 \le i \le d$, let $e_i$ be the number of atoms of $L = L(\mathcal{A})$ that lie below $V_i$, but not $V_{i - 1}$. We have $e_1 = 1$ and $\sum_{i = 1}^d e_i$ is the number of atoms in $L(\mathcal{A})$. Also, the characteristic polynomial of $L$ is $\chi(L, t) = \prod_{i = 1}^d (t - e_i)$.
		\end{enumerate}
	\end{defn}
	
	\color{black}
	One of the three initial examples considered in \cite{BEZ} is the graph hyperplane. We consider the computations in more detail below.
	
	\begin{exmp} (Matroids of graph hyperplane arrangements and $MCB(a)$) \\
		Given a graph $G$ with vertex set $[n] = \{ 1, \ldots, n \}$, consider the hyperplane arrangement $\mathcal{A}_G$ formed by hyperplanes of the form $x_i = x_j$ for each $(i, j) \in E(G)$ (i.e. pairs forming an edge of $G$). Intersections of hyperplanes that are considered are of the form $x_{i_1} = \cdots  = x_{i_k}$ for some set $\{ i_1, \ldots, i_k \}$. Since flats consist of maximal collections of hyperplanes from the arrangement considered ($\mathcal{A}_G$ in this case) giving rise to a specific linear subspace, the flats of the matroid $M_{\mathcal{A}_G}$ associated to $\mathcal{A}_G$ has ground set given by the elements of $E(G)$ (edges of $G$) and the flats are $E(G|_{V_i})$, where $V_i \subset [n]$ and $G|_{V_i}$ is the restriction of $G$ to the vertex subset $V_i$. \\
		
		In this particular setting, checking whether $MCB(a)$ can be satisfied doesn't seem to depend on the degree $a$. 
		
		\begin{prop}
			The matroid $M_{\mathcal{A}_G}$ of a hyperplane arrangement $\mathcal{A}_G$ in $\mathbb{R}^n$ associated to a graph $G$ with vertex set $[n]$ satisfies $MCB(a)$ for some $a$ if and only if every edge is bounded by vertices of degree $\ge 2$.
		\end{prop}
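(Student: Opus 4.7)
The plan is to prove the biconditional by an explicit contrapositive in one direction and a structural argument on partitions in the other. First I recall that, since each hyperplane $H_e = \{x_i = x_j\}$ of $\mathcal{A}_G$ forces coordinates to agree along connected components of the subgraph it selects, the flats of $M_{\mathcal{A}_G}$ are in bijection with partitions $\pi$ of $V(G) = [n]$ all of whose non-singleton blocks induce connected subgraphs of $G$, via $\pi \mapsto F_\pi = \{(i,j) \in E(G) : i, j \text{ lie in the same block of } \pi\}$.

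For the ``only if'' direction I argue the contrapositive. Suppose some $u \in V(G)$ has $\deg u = 1$, with unique incident edge $e = (u,v)$. Let $\pi$ consist of the singleton block $\{u\}$ together with the connected components of $G - u$; each non-singleton block is a connected component of a subgraph of $G$ and thus induces a connected subgraph, so $\pi$ is a valid flat-defining partition. Because $e$ is the only edge incident to $u$, the associated flat is $F_\pi = E(G) \setminus \{e\}$. Taking $F_1 = \cdots = F_a = F_\pi$ (the paving-matroid discussion records that repeats are permitted) and $p = e$ violates $MCB(a)$ for every $a \ge 1$.

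For the ``if'' direction, assume every vertex of $G$ has degree $\ge 2$. I aim to show $E(G) \setminus \{p\}$ is not a flat for any edge $p$, from which $MCB(1)$, and hence ``$MCB(a)$ for some $a$'', follows. Fix $p = (u,v)$ and consider any candidate flat-defining partition $\pi$ with $u \in B_u$, $v \in B_v$ in distinct blocks. The hypothesis $\deg u, \deg v \ge 2$ provides edges $e_u = (u,w)$ and $e_v = (v, w')$ with $w \ne v$ and $w' \ne u$; the requirement $F_\pi \supseteq E(G) \setminus \{p\}$ forces $w \in B_u$ and $w' \in B_v$, and iterating this propagation pulls every vertex of $V(G)$ into one of $B_u, B_v$ along edge-paths that avoid $p$.

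The main obstacle will be completing this last step: showing the forced bipartition is incompatible with both blocks inducing connected subgraphs and $p$ being the unique cross-edge. The degree-$\ge 2$ condition is precisely what blocks the ``isolating'' construction of the contrapositive, and I expect the crux to consist of converting this local obstruction into a global contradiction, for instance by tracing a cycle through $u$ and $v$ created by the forced additions to $B_u$ and $B_v$ and extracting an additional cross-edge that must lie in $E(G) \setminus \{p\}$.
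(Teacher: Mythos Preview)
Your ``only if'' direction is fine and in fact cleaner than the paper's two-case split: isolating the degree-$1$ vertex $u$ as a singleton block immediately exhibits $E(G)\setminus\{e\}$ as a single flat, which you then repeat.

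The genuine gap is in the ``if'' direction. You plan to finish by tracing a cycle through $u$ and $v$ that forces a second cross-edge between $B_u$ and $B_v$, but such a cycle need not exist. Take $G$ to be two triangles on $\{1,2,3\}$ and $\{4,5,6\}$ joined by the single edge $p=(3,4)$. Every vertex has degree $\ge 2$, yet $p$ is a bridge; the partition $\pi=\{\{1,2,3\},\{4,5,6\}\}$ has connected blocks and $F_\pi=E(G)\setminus\{p\}$ is a bona fide flat of the graphic matroid. So $MCB(1)$ fails, and by repetition $MCB(a)$ fails for every $a\ge 1$. Under your (standard) description of flats via partitions with connected blocks, the statement you are trying to prove is therefore false, and no amount of propagation will close the argument.

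The reason the paper's Case~3 goes through is that the paper works throughout with a narrower notion of flat, namely $E(G|_V)$ for a \emph{single} vertex subset $V$, rather than a disjoint union of such over the blocks of a partition. With that restriction the bridge example is no longer a single flat (no single $V$ captures both triangles without also capturing $(3,4)$), and the argument ``an edge at $p$ forces $p\in V$, an edge at $q$ forces $q\in V$, hence $(p,q)\in E(G|_V)$'' delivers $MCB(1)$. If you want to recover the paper's conclusion you must either adopt that restricted reading of ``flat'' or impose an extra hypothesis on $G$ (for instance $2$-edge-connectedness) that excludes bridges.
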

		
		\begin{proof}
			To see this, we look at what happens when we omit a specific edge from the union of edges coming from some collection of flats (which can be taken to be $a$). Note that the flats come from edges inside the restriction of the graph $G$ to some subset of the vertex set $[n] = \{ 1, \ldots, n \}$. We can split into cases according to the degrees of the vertices bounding the missing edge $e$.
			
			\begin{enumerate}
				\item \textbf{Case 1:} There is an edge $e$ where each bounding vertex has degree $1$.
				
				In this case, it doesn't seem like $MCB(a)$ is satisfied for \emph{any} $a$. This is because the flats $F_i = E(G|_{V_i})$ can be taken to come from any collection of vertex sets $V_i$ with union equal to  $A \setminus \partial e$, where $A \subset [n]$ is the set of vertices of degree $\ge 1$ and $\partial e$ denotes the pair of vertices bounding the missing edge $e$. This would contain all the edges except $e$. Note that this case would be omitted if the graph $G$ is assumed to be connected.
				
				\item \textbf{Case 2:} There is an edge $e$ where one bounding vertex has degree $1$ and the other has degree $\ge 2$.
				
				The matroid $M_{\mathcal{A}_G}$ still does \emph{not} satisfy $MCB(a)$ in this case. This is because the vertex subsets $V_i \subset [n]$ can be taken to have union equal to $A \setminus p$, where $A$ is defined in the same way as in Case 1 and $p$ is the vertex in $\partial e$ of degree $1$. In that case, the restriction to the given set of vertices is still missing the edge $e$ but contains all others.

				\item \textbf{Case 3:} Each edge $e$ is bounded by vertices of degree $\ge 2$.
				
				In this case, the matroids $M_{\mathcal{A}_G}$ \emph{do} satisfy $MCB(a)$ regardless of the choice of $a$. By including the edges connected to each of the two vertices in $\partial e = \{ p, q \}$, any edges induced by restriction to a subset of the vertices $[n]$ including edges other than $e$ containing $p$ or $q$ in the boundary must include $p$ and $q$ as well. Thus, a collection of edges coming from restrictions of vertex sets misisng at most one edge of $G$ contains all of the edges of $G$.
			\end{enumerate}
		\end{proof}
		
	\end{exmp}

	\begin{rem} ~\\
		\vspace{-3mm}
		\begin{enumerate}
			
			\item A graphic arrangement is supersolvable if and only if the graph in question is \emph{chordal} (i.e. for any cycle with $\ge 4$ vertices, there is an edge of $G$ connecting two vertices which are not adjacent in the cycle -- see Remark 2.5 on p. 9 of \cite{Bi}).

			\item Some other examples to consider are polytopal arrangements from hyperplanes built out of facets of polytopes and Coxeter arrangements from finite subsets of $GL_d(\mathbb{R})$ (orthogonal reflections through hyperplanes) (p. 268 -- 269 of \cite{BEZ}).
		\end{enumerate}
	\end{rem}

	In general, the computation above and the definition of $MCB(a)$ for matroids $M_{\mathcal{A}}$ associated to hyperplane arrangements $\mathcal{A}$ seems to indicate some kind of forced connectivity since a ``missing hyperplane'' must intersect collections of intersections of other hyperplanes in some way. One way to do this would be to impose a dependency on the hyperplane intersections depending on the indices considered. However, we still need to check whether such a condition is necessary. \\
	
	For supersolvable arrangements, checking $MCB(a)$ can ``generically'' be reduced to a question on a smaller hyperplane arrangement.

	\begin{thm} \label{supersoldec} (Bj\"orner--Edelman--Ziegler, Theorem 4.3 on p. 274 of \cite{BEZ}) \\
		Every arrangement $\mathcal{A}$ of $\rk \le 2$ is supersolvable. An arrangement $\mathcal{A}$ of rank $d \ge 3$ is supersolvable if and only if $\mathcal{A} = \mathcal{A}_0 \sqcup \mathcal{A}_1$, where $\mathcal{A}_0$ is a supersolvable arrangement of rank $d - 1$ and, for any $H', H'' \in \mathcal{A}_1$ with $H' \ne H''$, there is an $H \in \mathcal{A}_0$ such that $H' \cap H'' \subset H$.
	\end{thm}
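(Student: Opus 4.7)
The plan is to exploit Stanley's characterization of supersolvable geometric lattices: $L$ is supersolvable exactly when it admits a maximal chain of \emph{modular} elements. Under this reformulation, the theorem asserts that the modularity of the rank $d-1$ element $V_{d-1}$ at the top of such a chain is equivalent to the geometric ``intersection condition'' on the hyperplanes not lying below $V_{d-1}$. The rank $\le 2$ case is immediate: for a rank $1$ lattice the chain $\widehat{0} \prec \widehat{1}$ suffices, and for a rank $2$ lattice any atom $V_1$ yields a maximal chain $\widehat{0} \prec V_1 \prec \widehat{1}$ whose interior element is automatically modular, since any two distinct atoms join to $\widehat{1}$ and meet at $\widehat{0}$.

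For the forward direction, suppose $\mathcal{A}$ has rank $d \ge 3$ and admits a maximal modular chain $V_0 \prec V_1 \prec \cdots \prec V_d$ in $L(\mathcal{A})$. Set $\mathcal{A}_0 := \{ H \in \mathcal{A} : V_{d-1} \subseteq H \}$ and $\mathcal{A}_1 := \mathcal{A} \setminus \mathcal{A}_0$. Then $V_{d-1} = \bigcap_{H \in \mathcal{A}_0} H$, so $\mathcal{A}_0$ has rank $d - 1$ and $L(\mathcal{A}_0)$ is naturally the interval $[\widehat{0}, V_{d-1}] \subseteq L(\mathcal{A})$; the truncation $V_0 \prec \cdots \prec V_{d-1}$ is a maximal modular chain there, certifying that $\mathcal{A}_0$ is supersolvable. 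For the intersection condition, take distinct $H', H'' \in \mathcal{A}_1$. The lattice join $H' \vee H''$ has $\rk = 2$, and because neither atom lies below $V_{d-1}$ one has $(H' \vee H'') \vee V_{d-1} = \widehat{1}$. The modular identity applied to $V_{d-1}$ and $H' \vee H''$ then gives $(d-1) + 2 = d + \rk((H' \vee H'') \wedge V_{d-1})$, so $(H' \vee H'') \wedge V_{d-1}$ is an atom $H \in \mathcal{A}_0$ satisfying $H' \cap H'' \subseteq H$ as subspaces.

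For the reverse direction, start from a maximal modular chain $V_0 \prec \cdots \prec V_{d-1}$ of $L(\mathcal{A}_0)$ with $V_{d-1} = \bigcap \mathcal{A}_0$, and extend it by $V_d := \bigcap \mathcal{A}$. Modularity of each $V_i$ with $i \le d-2$ is inherited from $L(\mathcal{A}_0)$ since that property is local to the interval $[\widehat{0}, V_{d-1}] \subseteq L(\mathcal{A})$. To check modularity of $V_{d-1}$, one verifies $\rk(V_{d-1}) + \rk(y) = \rk(V_{d-1} \vee y) + \rk(V_{d-1} \wedge y)$ for every $y$. Semimodularity supplies the $\ge$ direction automatically, and the intersection hypothesis produces an atom inside $V_{d-1} \wedge (H' \vee H'')$ whenever $H', H'' \in \mathcal{A}_1$ are distinct, establishing the identity on joins of two atoms of $\mathcal{A}_1$. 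An induction on $\rk(y)$ then propagates this to arbitrary $y$ by writing $y$ as a join of atoms and adding them one at a time: steps that insert an atom from $\mathcal{A}_0$ are automatic since the atom lies below $V_{d-1}$, while steps that insert an atom from $\mathcal{A}_1$ are handled by the hypothesis together with modularity for shorter chains.

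The main obstacle is precisely this last induction --- upgrading the hypothesis, which a priori only controls pairs of atoms in $\mathcal{A}_1$, to full modularity of $V_{d-1}$ against every element of $L(\mathcal{A})$. The crux is tracking how the meet $V_{d-1} \wedge y_j$ evolves as one passes from $y_j$ to $y_{j+1} = y_j \vee a$ for an atom $a$, which either stays fixed (if $a \in \mathcal{A}_0$, since $a \le V_{d-1}$) or absorbs a new atom of $L(\mathcal{A}_0)$ dictated by the intersection hypothesis together with already-verified modularity for shorter chains. Once modularity of $V_{d-1}$ is established, Stanley's criterion yields supersolvability of $L(\mathcal{A})$, and hence of $\mathcal{A}$.
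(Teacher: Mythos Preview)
The paper does not prove this theorem at all: it is quoted verbatim as Theorem~4.3 of Bj\"orner--Edelman--Ziegler \cite{BEZ} and then used as a black box to set up Proposition~\ref{supersolmcb}. There is therefore no ``paper's own proof'' to compare your attempt against.

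As for your argument itself, the overall strategy via Stanley's modular-chain characterization is the standard one and is essentially what appears in \cite{BEZ}. One point deserves tightening. You write that modularity of each $V_i$ with $i \le d-2$ ``is inherited from $L(\mathcal{A}_0)$ since that property is local to the interval $[\widehat{0}, V_{d-1}]$.'' That is not true as stated: an element modular in an interval need not be modular in the ambient lattice. What \emph{is} true is that once you have established modularity of $V_{d-1}$ in $L(\mathcal{A})$, modularity of any $V_i \le V_{d-1}$ in the interval lifts to modularity in $L(\mathcal{A})$; so the logical order should be to prove modularity of $V_{d-1}$ first and then invoke this lifting lemma. Your inductive scheme for $V_{d-1}$ is workable but can be replaced by the cleaner observation that a coatom $m$ of a geometric lattice is modular as soon as every rank-$2$ flat not below $m$ meets $m$ in an atom, which is exactly what the intersection hypothesis on $\mathcal{A}_1$ provides (lines with one or both atoms in $\mathcal{A}_0$ being automatic). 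This avoids the somewhat delicate atom-by-atom induction you outline in the final paragraph.
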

	
	Using this result, we can make the following observations.

	\begin{prop} \label{supersolmcb}
		Let $\mathcal{A}$ be a central supersolvable hyperplane arrangement.
		\begin{enumerate}
			\item Writing $\mathcal{A} - \mathcal{A}_0 \sqcup \mathcal{A}_1$ as in Theorem \ref{supersoldec}, let $d = \rk \mathcal{A}$ and $\mathcal{B}_0 = \{ V_{d - 1} \vee (H' \wedge H'') : H', H'' \in \mathcal{A}_1 \} $ be the hyperplanes in $\mathcal{A}_0$ containing the pairwise intersections of elements of $\mathcal{A}_0$. Given  a collection of hyperplane intersections/flats $P$, write $P = P_0 \sqcup P_1$ with $P_0$ only from hyperplanes in $\mathcal{A}_0$ and $P_1$ involving hyperplanes from $\mathcal{A}_1$ in each intersection. \\
			
			In this setting, $M_{\mathcal{A}}$ satisfies $MCB(a)$ if and only if the following conditions hold each collection $P = P_0 \sqcup P_1$ ($k := |P_1|$) using up $\ge |\mathcal{A}| - 1$ hyperplanes and $1 \le k \le a$:

				\begin{itemize}
					
					\item Let $BP_0$ be the counterpart of $\mathcal{B}_0$ for $P$ built out pairwise intersections of elements of $P_1$. $M_{P_1}$ satisfies $MCB(k)$ and the $\le a - k$ hyperplanes in  $BP_0$ use up all of the hyperplanes in $\mathcal{A}_0$.

					\item $M_{BP_0}$ satisfies $MCB(a - k)$ and $P_1$ uses up all the hyperplanes in $\mathcal{A}_1$.
				\end{itemize}
			
			In particular, it suffices to have $M_{\mathcal{A}_1}$ satisfy $MCB(k)$ and $M_{\mathcal{A}_0 \setminus \mathcal{B}_0}$ satisfy $MCB(k)$ and $M_{\mathcal{A}_0 \setminus \mathcal{B}_0}$ for each $0 \le k \le a$. 
			
			\item The central supersolvable hyperplane arrangements such that $MCB(d)$ is satisfied for the minimal nontrivial degree $a$ can take \emph{any} possible characteristic polynomial or rank generating function. This means that any central supersolvable hyperplane arrangement of rank $d$ has the same characteristic polynomial as one satisfying $MCB(d)$.

			\item Let $u = \rk \mathcal{A}$ and $\Omega_u$ be the intersection of all the hyperplanes in $\mathcal{A}$, and $\Omega_{u - 1}$ be the intersection of the hyperplanes in $\mathcal{A}_0$. For any $R \in \mathcal{A}_1$, the intersection $R \cap \Omega_{u - 1} = \Omega_u$. In particular, this implies that any pair of flats of $M_{\mathcal{A}}$ where one of them is the ground set $\mathcal{A}_0$ of $M_{\mathcal{A}_0}$ and the other contains an element of $\mathcal{A}_1$ covers the entire ground set of $M_{\mathcal{A}}$.

		\end{enumerate}
	\end{prop}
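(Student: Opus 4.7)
The plan is to handle the three parts sequentially, leveraging the decomposition from Theorem \ref{supersoldec} together with the characterization of $MCB(a)$ for hyperplane arrangements in Proposition \ref{degnumlinemcb}(2).

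I would begin with Part 3 since it is essentially a dimension count and provides intuition for the rest. The inclusion $\Omega_u \subseteq R \cap \Omega_{u-1}$ is automatic because $R \in \mathcal{A}$ and $\Omega_{u-1} \supseteq \Omega_u$. For the reverse inclusion I would use ranks: since $\rk \mathcal{A}_0 = u - 1$ while $\rk \mathcal{A} = u$, adjoining any $R \in \mathcal{A}_1$ must strictly drop the intersection dimension, so $\dim(R \cap \Omega_{u-1}) \leq \dim \Omega_{u-1} - 1 = \dim \Omega_u$; combined with the first inclusion this forces equality. The stated consequence then falls out: a flat of $M_{\mathcal{A}}$ whose hyperplanes include all of $\mathcal{A}_0$ corresponds to a linear subspace contained in $\Omega_{u-1}$, and further containing some $R \in \mathcal{A}_1$ forces that subspace into $R \cap \Omega_{u-1} = \Omega_u$; since $\Omega_u$ is minimal in the intersection lattice, such a flat must be all of $\mathcal{A}$.

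For Part 1, the strategy is to translate the $MCB(a)$ criterion through the decomposition. Given a collection of flats $P$ that witnesses failure of $MCB(a)$ (covering $\mathcal{A} \setminus \{H\}$ for some $H$ but not all of $\mathcal{A}$), I would partition $P = P_0 \sqcup P_1$ with $P_0$ made of flats lying wholly in $\mathcal{A}_0$ and each flat of $P_1$ containing at least one hyperplane of $\mathcal{A}_1$. The supersolvability condition drives the key observation: whenever a flat $F \in P_1$ contains two elements $R', R'' \in \mathcal{A}_1$, there is $H \in \mathcal{A}_0$ with $R' \cap R'' \subseteq H$, and since $F$'s defining subspace lies in $R' \cap R''$ it lies in $H$, forcing $H \in F$. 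Running this observation across all pairs appearing in $P_1$ yields the auxiliary collection $BP_0 \subseteq \mathcal{B}_0$ of $\mathcal{A}_0$-hyperplanes supplied for free by $P_1$. The failure then splits: either $P_1$ fails to cover $\mathcal{A}_1$ up to one element (an $MCB(k)$ failure on $M_{P_1}$), or $BP_0 \cup P_0$ fails to cover $\mathcal{A}_0$ up to one element (an $MCB(a-k)$ failure on $M_{\mathcal{A}_0}$). Reversing the logic recovers sufficiency, and specializing $P_1 = \mathcal{A}_1$ and $BP_0 = \mathcal{B}_0$ gives the cleaner sufficient condition. The main obstacle here is bookkeeping, namely verifying that the missing hyperplane of the $MCB(a)$ failure belongs to exactly one of $\mathcal{A}_0$ or $\mathcal{A}_1$ and that the $BP_0$ contribution is not double counted when $P_0$ also contains elements of $\mathcal{B}_0$.

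For Part 2, I would argue by inductively constructing, for any admissible parameter sequence $(e_1, \ldots, e_d)$ in the sense of Definition \ref{supersoldefs}(3), a central supersolvable arrangement $\mathcal{A}$ with characteristic polynomial $\prod_{i=1}^d (t - e_i)$ satisfying $MCB(d)$ at the minimal nontrivial degree. Starting with a supersolvable $\mathcal{A}_0$ of rank $d-1$ realizing $(e_1, \ldots, e_{d-1})$, I would attach $e_d$ new hyperplanes forming $\mathcal{A}_1$, choosing their pairwise intersections to land in one fixed hyperplane of $\mathcal{A}_0$ so that Theorem \ref{supersoldec} applies with minimal disruption. The parameter $e_d$ is built in by construction, so the factorized characteristic polynomial follows from Definition \ref{supersoldefs}(3), and Part 1 provides a local check of $MCB(d)$ at the recursive step. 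The hardest part here will be ensuring geometric realizability when the $e_i$ are small or when the ambient characteristic imposes restrictions, which I expect to handle by working over a sufficiently generic field and choosing the attached hyperplanes in general position subject only to the supersolvability constraint.
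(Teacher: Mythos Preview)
Your plan is essentially the paper's own approach in all three parts: the same partition $P = P_0 \sqcup P_1$ and case split on where the missing hyperplane lies for Part~1, the same dimension count $\dim(R \cap \Omega_{u-1}) = \dim \Omega_{u-1} - 1$ for Part~3, and the same inductive construction on the exponent sequence $(e_1,\dots,e_d)$ for Part~2.

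One point worth sharpening in Part~2: the paper does not merely ask that the pairwise intersections of elements of $\mathcal{A}_1$ land in a single fixed $H \in \mathcal{A}_0$, it takes $\mathcal{A}_1$ to be a \emph{pencil} through a common $(d-2)$-plane inside $H$. That stronger choice makes the verification of $MCB$ immediate, since then any flat containing two hyperplanes of $\mathcal{A}_1$ automatically contains all of them, so a collection of flats missing at most one hyperplane of $\mathcal{A}$ cannot miss an element of $\mathcal{A}_1$ alone. Your weaker condition (all pairwise intersections contained in one $H$) does not by itself force this all-or-nothing behavior on $\mathcal{A}_1$, so you would genuinely need to fall back on the Part~1 criterion to close the argument; tightening to a pencil, as the paper does, avoids that detour and also sidesteps your realizability worry, since a pencil of $e_d$ hyperplanes through a fixed codimension-two subspace avoiding $V_{d-1}$ is easy to produce.
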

	
	\begin{proof}
		\begin{enumerate}
			\item  Consider a collection of intersections of $a$ hyperplanes of $\mathcal{A}$ which is ``missing'' at most hyperplane. Let $P$ be a colection of such hyperplane intersections with $P_0$ only involving hyperplanes in $\mathcal{A}_0$ and $P_1$ involving hyperplanes in $\mathcal{A}_1$ (and possibly hyperplanes in $\mathcal{A}_0$). We can partition the cases involved into ones where $|P_1| = k$ as $k$ varies over $0 \le k \le a$. This potential missing hyperplane is either in $\mathcal{A}_1$ or $\mathcal{A}_0$. If we start indexing the hyperplane intersections by ones that involve elements of $\mathcal{A}_1$, the intersections involved induce a collection of intersections of elements of $\mathcal{A}_1$. These intersections must also include the (unique) hyperplanes in $\mathcal{A}_0$ contain pairwise intersections of hyperplanes in $P_1$. Omitting these from the elements of $\mathcal{A}_1$, the remaining $a - k$ hyperplane intersections (from $P_0$) form $BP_0$. If the potential missing element is in $\mathcal{A}_1$, the elements of $BP_0$ use up all the elements of $\mathcal{A}_0$. In order for $MCB(a)$ to be satisfied, the missing element in $\mathcal{A}_1$ should actually be covered by $P_0$. This is the statement that $M_{P_1}$ satisfies $MCB(k)$. If the potential missing element is in $\mathcal{A}_0$, we have that $P_1$ uses up all the hyperplanes in $\mathcal{A}_1$. This means that the elements of $BP_0$ satisfy $MCB(a - k)$ as we already have a cover.

			\item Note that $e_d = |\mathcal{A}_1|$ (p. 275 of \cite{BEZ}). If $\mathcal{A}_1$ is a pencil of hyperplanes containing a single $(d - 2)$ linear subspace of some fixed $H \in \mathcal{A}_0$ which do \emph{not} contain the line $V_{d - 1}$, then intersecting any two of the hyperplanes in $\mathcal{A}_1$ means intersecting \emph{all} of the hyperplanes in $\mathcal{A}_1$. This means that any collection of intersections of hyperplanes in $\mathcal{A}$ which where at most $1$ hyperplane is not involved actually involves \emph{all} of the hyperplanes in $\mathcal{A}$ and the $MCB(a)$ property is satisfied for any $a$ such that this question is nontrivial. This can be done at each step of the construction of a supersolvable hyperplane arrangement of rank $\ge 3$. For the base case of a rank 2 supersolvable hyperplane arrangement, there are no restrictions on the ``base'' characteristic polynomial since \emph{any} central hyperplane arrangement of rank $\le 2$ is supersolvable by Theorem \ref{supersoldec}. The conclusion follows from noting that $\chi(L, t) = \prod_{i = 1}^d (t - e_i)$ (Part 3 of Definition \ref{supersoldefs}).

			\item In general, $V_{d - 1}$ can be taken to be a line contained in the common intersection $\Omega_{d - 1}$ of the hyperplanes in $\mathcal{A}_0$. Given a central hyperplane arrangement of rank $u$, let $\Omega_u$ be the intersection of all the hyperplanes in the arrangement. The new hyperplanes $A_i \in \mathcal{A}_1$ are those do \emph{not} contain $V_{d - 1}$. Choosing an initial such hyperplane $A_1$ to put in $\mathcal{A}_1$, we actually have that $\Omega_u = \Omega_{u - 1} \cap A_1$. Since $A_1 \not\supset V_{d - 1}$, we have that  $A_1 \not\supset \Omega_{u - 1}$ and $\dim A_1 \cap \Omega_{u - 1} = d - u + 1 - 1 = d - u$. Since $A_1 \cap \Omega_{u - 1}$ contains the intersection of \emph{all} the hyperplanes in the arrangement although it is of the same dimension (due to the rank), we have that $\Omega_u = \Omega_{u - 1} \cap A_1$. The remaining choices involve which $(d - 2)$-planes to use for the intersections of pairs of elements of $\mathcal{A}_1$ and what hyperplanes to place in them. Since the $(d - 2)$-planes must contain $\Omega_u$ (which is $\Omega_3$ in this case), the $(d - 2)$-planes depend on a choice of $d - 2 - (d - u) = u - 2$-planes (which are lines in this case). \\

		\end{enumerate}
	\end{proof}

	\color{black}

	\color{black}

\end{document}